\definecolor{mahogany}{cmyk}{0, 0.77, 0.87, 0}
\definecolor{salmon}{cmyk}{0, 0.53, 0.38, 0}
\definecolor{melon}{cmyk}{0, 0.46, 0.50, 0}
\definecolor{yellowgreen}{cmyk}{0.44, 0, 0.74, 0}
\definecolor{brickred}{cmyk}{0, 0.89, 0.94, 0.28}
\definecolor{OliveGreen}{cmyk}{0.64, 0, 0.95, 0.40}
\definecolor{RawSienna}{cmyk}{0, 0.72, 1.0, 0.45}
\definecolor{ZurichRed}{rgb}{1, 0, 0} 
\newtheorem{theorem}{Theorem}[section]
\newtheorem{remark}[theorem]{Remark}
\numberwithin{equation}{section}
\newcommand{\bR}{\mathbb{R}}
\begin{document}

\title[Heat Trace of non-local operators]
{Heat Trace of non-local operators}

\author{Rodrigo Ba\~nuelos}\thanks{R. Ba\~nuelos is supported in part  by NSF Grant
\# 0603701-DMS}
\address{Department of Mathematics, Purdue University, West Lafayette, IN 47907, USA}
\email{banuelos@math.purdue.edu}
\author{Selma Y{\i}ld{\i}r{\i}m Yolcu}
\address{Department of Mathematics, Purdue University, West Lafayette, IN 47907, USA}
\email{syildir@math.purdue.edu}

\begin{abstract} This paper extends results of M. van den Berg on two-term asymptotics for the trace of Sch\"odinger operators when the Laplacian is replaced by non-local (integral) operators corresponding to rotationally symmetric stable processes and other closely related L\'evy processes. 
\end{abstract}

\maketitle

\tableofcontents

\section{Introduction} 

There is an extensive literature of trace formul{\ae} for heat kernels of the Laplacian and its Schr{\"o}dinger perturbations  in spectral and scattering theory. For instance, van den Berg \cite{vanden1} and later Ba\~nuelos and S{\'a} Barreto \cite{BanSaBar},  explicitly compute several  coefficients in the asymptotic expansion of the trace of the heat kernel of the Schr{\"o}dinger operator $-\Delta+V$  as $t\downarrow0$ for potentials $V\in \mathcal{S}(\bR^d)$, the class  of  rapidly decaying functions at infinity. In particular, in \cite{BanSaBar} a general formula is obtained for these coefficients by using elementary Fourier transform methods.  For applications of these results to problems in scattering theory, see \cite{BanSaBar} and references therein.  The results in \cite{BanSaBar}  were extended by  Donnelly  \cite{Donn} to certain compact Riemannian manifold.  Heat asymptotic results  have also been widely used in statistical mechanics, for details we refer the reader to the article \cite{Lieb} of E. H. Lieb on the second virial coefficient of a hard-sphere gas at low temperatures, and to the article  \cite{PenPenS} of M. D. Penrose, O. Penrose and G. Stell on the sticky spheres in quantum mechanics.  We also refer the reader to Datchev and Hezari's  overview article \cite{KirHez} for various other related spectral asymptotic results and applications.

For non-local (integral) operators which arise by replacing the Brownian motion with other L\'evy processes, many questions concerning spectral asymptotics are wide open at this point.  These include, for example, the Weyl two-term asymptotics for the spectral counting function of  the fractional Laplacian with Dirichlet boundary conditions and the McKean-Singer \cite{MckSin} result involving the Euler characteristic of the domain in the third-term asymptotics of the trace of the heat semigroup. On the other hand, a two-term trace asymptotics involving the volume of the domain and the surface area of the boundary is proved in Ba\~nuelos and  Kulczycki \cite{BanKul3} (for $C^{1,1}$ domains) and in Ba\~nuelos, Kulczycki and Siudeja \cite{BanKulSiu} (for Lipschitz domains) for the fractional Laplacian associated with symmetric stable processes.  These results are in parallel to the results of van den Berg \cite{vanden2} and Brown \cite{Brow} for the Laplacian. For further recent work on the Dirichlet case in domains of Euclidean space, see Frank and Geisinger, \cite{FraGei1} and \cite{FraGei2}.
 The purpose of this paper is to obtain analogues of the van den Berg result in \cite{vanden1} for the 
fractional Laplacian $\Delta^{\alpha/2}$ associated with symmetric $\alpha$-stable processes, $0<\alpha<2$, in ${\mathbb R}^d$ and for other closely related non-local operators corresponding to sums of stable processes and the relativistic Brownian motion.  These are all L\'evy processes which are obtained by subordination of Brownian motion. 

\section{Stable processes, statement of results}
Before we state our results precisely, we recall the basic definitions and elementary notions related to stable processes. Let $X_t$ be the $d$-dimensional symmetric $\alpha$--stable process of order $\alpha\in(0,2]$ in ${\mathbb R}^d$. The process $X_t$ has stationary independent increments and its transition density $p_t^{(\alpha)}(x,y)=p_t^{(\alpha)}(x-y), \;t>0, \;x, y\in {\mathbb R}^d$, is determined by its Fourier transform (characteristic function)
\begin{equation}\label{freekernel} 
e^{-t|\xi|^{\alpha}}=E(e^{i\xi\cdot X_t})= \int_{{\mathbb R}^d}e^{i\xi\cdot y}p_t^{(\alpha)}(y)dy,\qquad t>0, \quad \xi\in{\mathbb R}^d.
\end{equation} 
Denoting by $P^x$ and $E^x$ the probability and expectation, respectively,  of this process starting at $x$ we have that  any Borel subset $B\subset {\mathbb R}^d$, $x\in {\mathbb R}^d$, $t>0$,
$$P^x(X_t\in B)=\int_B p_t^{(\alpha)}(x-y)dy,$$
where 
\begin{equation}\label{trdenalphafourier}
p_t^{(\alpha)}(x)=\frac{1}{(2\pi)^d}\int_{{\mathbb R}^d}e^{-ix\cdot\xi}e^{-t|\xi|^{\alpha}}d\xi.
\end{equation}
Here we can also write 
\begin{equation}\label{subordination}
p_t^{(\alpha)}(x)=\int_0^{\infty}\frac{1}{(4\pi s)^{d/2}}e^{\frac{-|x|^2}{4s}} \eta^{\alpha/2}_t(s)\,ds,
\end{equation}
where $\eta^{\alpha/2}_t(s)$ is the density for the $\alpha/2$-stable subordinator. 
While explicit formula for the transition density of  symmetric $\alpha$-stable processes are only available for $\alpha=1$ (the Cauchy process) and $\alpha=2$ (the Brownian motion), these processes share many of the basic properties of Brownian motion.   It also follows trivially from \eqref{subordination} that $p_t^{(\alpha)}(x)$ is radial, symmetric and deceasing in $x$. Thus exactly as in the Brownian motion case we have
\begin{equation}\label{trdenest}
p_t^{(\alpha)}(x)\leq t^{-d/\alpha}p_1^{(\alpha)}(0).  
\end{equation}
In fact, if we denote by $ \omega_d$ the surface area of the unit sphere in $\bR^d$ we can compute $p_1^{(\alpha)}(0)$ more explicitly.  
\begin{eqnarray}\label{p(0)value}
p_1^{(\alpha)}(0)=
\frac{1}{(2\pi)^{d}}\int_{\bR^d}e^{-|\xi|^{\alpha}}d\xi
&=&  \frac{\omega_d}{(2 \pi)^d\alpha}
 \int_0^{\infty}e^{-s} s^{(\frac{d}{\alpha}  -1)} ds\nonumber\\
 &=& \frac{\omega_d \Gamma(d/\alpha)}{(2\pi)^d\alpha}. 
\end{eqnarray}
Throughout this paper we will deal with radial transition functions and use the notation 
$$
p_t^{(\alpha)}(x, y)=p_t^{(\alpha)}(x-y). 
$$
In particular, 
$p_t^{(\alpha)}(x, x)=p_t^{(\alpha)}(0).
$
Of  importance  for us in this paper is the scaling property of these processes.   
More precisely, by a simple change of variables in \eqref{trdenalphafourier}, we see that these processes are self-similar with scaling 
\begin{equation}\label{scaling}
p_t^{(\alpha)}(x,y)=t^{-d/\alpha}p_1^{(\alpha)}(t^{-1/\alpha}x,t^{-1/\alpha}y).
\end{equation}

The transition densities $p_t^{(\alpha)}(x)$  satisfy the following well-known two sided inequality valid for all $x\in \bR^d$ and $t>0$,
\begin{equation}\label{basicestiamte}
C_{\alpha, d}^{-1} \left( t^{-d/\alpha}\wedge\frac{t}{|x|^{d+\alpha}} \right) \leq p_t^{(\alpha)}(x)\leq C_{\alpha, d} \left( t^{-d/\alpha}\wedge\frac{t}{|x|^{d+\alpha}} \right),
\end{equation}
where the constant $C_{\alpha, d}$ only depends on $\alpha$ and $d$. Here and throughout the paper we use the notation $a\wedge b=\min\{a, b\}$ and $a\vee b=\max\{a, b\}$ for any $a, b\in \bR$. 

For rapidly decaying functions  $f\in{\mathcal S}({\bR^d})$, we have the semigroup of the stable processes defined via the  the Fourier inversion formula by 

\begin{eqnarray*}
T_tf(x)&=&E^x[f(X_t)]=E^0[f(X_t+x)]\\
&=&\int_{{\mathbb R}^d}f(x+y)p_t(dy)=p_t*f(x)\\
&=&\frac{1}{(2\pi)^d}\int_{{\mathbb R}^d}e^{ix\cdot\xi}e^{-t|\xi|^{\alpha}}{\widehat{f}}(\xi)d\xi.
\end{eqnarray*}

By differentiating this at $t=0$ we see that its  infinitesimal generator is $\Delta^{\alpha/2}$ in the sense that $\widehat{\Delta^{\alpha/2} f}(\xi)=-|\xi|^{\alpha}\widehat{f}(\xi)$.   This is a non-local operator such that for suitable test functions, including all functions in $f\in C_0^{\infty}(\bR^d)$, we can define it as the principle value integral 

\begin{eqnarray}
\Delta^{\alpha/2}f(x)={\mathcal A}_{d,\alpha}\lim_{\epsilon\to0^+}\int_{\{|y|>\epsilon\}}\frac{f(x+y)-f(x)}{|y|^{d+\alpha}}dy,\label{infgen}
\end{eqnarray}
where 
$${\mathcal A}_{d,\alpha}=\frac{\Gamma\left(\frac{d-\alpha}{2}\right)}{2^{\alpha}\pi^{d/2}\Gamma\left(\frac{\alpha}{2}\right)}.$$

We will denote by $H_0$ the fractional Laplacian operator  $\Delta^{\alpha/2}, \alpha\in(0,2]$  and by $H$ its Schr{\"o}dinger perturbation $H=\Delta^{\alpha/2}+V$, where $V\in L^{\infty}(\bR^d)$.  We let $e^{-tH}$ and $e^{-tH_0}$ be the associated heat semigroups and let $p_t^H$ and $p_t^{(\alpha)}$ denote their corresponding transition densities (heat kernels). In fact, $p_t^{(\alpha)}$ is just as in \eqref{freekernel} and the Feynman-Kac formula gives  

\begin{equation}\label{feynmankac}
p_t^H(x,y)=p_t^{(\alpha)}(x,y)E_{x, y}^t\left(e^{-\int_0^tV(X_s)ds}\right), 
\end{equation}
where $E_{x, y}^t$ is the expectation with respect to the stable process (bridge) 
starting at $x$ conditioned to be at $y$ at time $t$.  The main object of study in this paper is the trace difference 
\begin{eqnarray}\label{trace}
Tr(e^{-tH}-e^{-tH_0})&=&\int_{{\mathbb R}^d}(p_t^H(x,x)-p_t^{(\alpha)}(x,x))dx\nonumber\\
&=& p_t^{(\alpha)}(0)\int_{\bR^d} E_{x, x}^t\left(e^{-\int_0^tV(X_s)ds}-1\right)dx\\
 &=& t^{-d/\alpha}p_1^{(\alpha)}(0)\int_{\bR^d} E_{x, x}^t\left(e^{-\int_0^tV(X_s)ds}-1\right)dx\nonumber, 
\end{eqnarray}
where $p_1^{(\alpha)}(0)$ is the dimensional constant given by the right hand side of \eqref{p(0)value}.  

Before we go further let us observe that this quantity is well defined for all $t>0$, provided $V\in L^{\infty}(\bR^d)\cap L^{1}(\bR^d)$.  Indeed, the elementary inequality $|e^{z}-1|\leq |z| e^{|z|}$ immediately gives that 
\begin{equation*}
\left|\int_{\bR^d} E_{x, x}^t\left(e^{-\int_0^tV(X_s)ds}-1\right)dx\right|\leq 
e^{t\|V\|_{\infty}} \int_{\bR^d} E_{x, x}^t\left(\int_0^t|V(X_s)|ds\right)dx.
\end{equation*} 
However,
\begin{eqnarray}\label{L1norm}
E_{x, x}^t\left(\int_0^t|V(X_s)|ds\right)&=&\int_0^tE_{x, x}^t|V(X_s)| ds\\
&=&\int_0^t\int_{{\mathbb R}^d}\frac{p^{(\alpha)}_s(x,y)p^{(\alpha)}_{t-s}(y,x)}{p_t^{(\alpha)}(x,x)}|V(y)|dy ds.\nonumber
\end{eqnarray}
The  Chapman--Kolmogorov equations and the fact that $p_t^{(\alpha)}(x, x)=p_t^{(\alpha)}(0,0)$ give that 
$$
\int_{{\mathbb R}^d}\frac{p^{(\alpha)}_s(x,y)p^{(\alpha)}_{t-s}(y,x)}{p_t^{(\alpha)}(x,x)}dx=1 
$$
and hence 
\begin{equation}\label{oftenused} 
\int_{\bR^d} E_{x, x}^t\left(\int_0^t|V(X_s)|ds\right) dx=t\|V\|_1. 
\end{equation}
It follows then that 
\begin{equation}\label{generalbound}
\big| Tr(e^{-tH}-e^{-tH_0})\big| \leq t^{-d/\alpha+1}p_1^{(\alpha)}(0)\|V\|_1e^{t\|V\|_{\infty}},  
\end{equation}
valid for all $t>0$ and all potentials $V\in L^{\infty}(\bR^d)\cap L^{1}(\bR^d)$.  

The previous  argument also shows that for all potentials 
$V\in L^{\infty}(\bR^d)\cap L^{1}(\bR^d)$,  
\begin{equation}\label{traceassum}
 Tr\left(e^{-tH}-e^{-tH_0}\right)=p_t^{(\alpha)}(0)\sum_{k=1}^{\infty} \frac{(-1)^k}{k!}\int_{\bR^d} E_{x,x}^t\left(\int_0^t V(X_s)ds\right)^k dx,
\end{equation}
where the sum is absolutely convergent for all $t>0$. 

We have the following two Theorems which  parallel the results in van den Berg \cite{vanden1} for the Laplacian.  Note however, that here (ii) in Theorem \ref{theorem1} (as well as Theorem \ref{theorem2}) is more general as we do not assume that the potential is nonnegative.   

\begin{theorem} \label{theorem1} (i) Let $V:{\mathbb R}^d\to(-\infty, 0]$, $V\in L^{\infty}(\bR^d)\cap L^{1}(\bR^d)$. 
Then for all $t>0$ 
\begin{equation}\label{thm1}
p_t^{\alpha}(0)t\|V\|_1\leq Tr(e^{-tH}-e^{-tH_0})
\leq p_t^{(\alpha)}(0)\left(t\|V\|_1+\frac{1}{2}t^2\|V\|_1\|V\|_{\infty}e^{t\|V\|_{\infty}}\right).
\end{equation}
In particular 
\begin{eqnarray}
Tr(e^{-tH}-e^{-tH_0})&=&p_t^{(\alpha)}(0)\left(t\|V\|_1+{\mathcal O}(t^2)\right)\nonumber\\
&=&t^{-d/\alpha}p_1^{(\alpha)}(0)\left(t\|V\|_1+{\mathcal O}(t^2)\right), 
\end{eqnarray}
as $t\downarrow 0$. 

(ii) If we only assume that $V\in L^{\infty}(\bR^d)\cap L^{1}(\bR^d)$,  then for all $t>0$, 
\begin{equation}
\Big|Tr(e^{-tH}-e^{-tH_0})+p_t^{(\alpha)}(0)t\int_{\bR^d}V(x)dx\Big| \leq p_t^{(\alpha)}(0)Ct^2\|V\|_1\|V\|_{\infty}e^{t\|V\|_{\infty}}, 
\end{equation}
for some universal constant $C$.   From this we conclude that 
\begin{equation}\label{firstterm}
Tr(e^{-tH}-e^{-tH_0})=p_t^{(\alpha)}(0)\left(-t\int_{\bR^d}V(x)dx+{\mathcal O}(t^2)\right),
\end{equation}
as $t\downarrow 0$. 
\end{theorem}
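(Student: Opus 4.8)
The plan is to deduce both parts directly from \eqref{trace}, \eqref{oftenused} and the expansion \eqref{traceassum}, combined with two elementary scalar inequalities valid for every real $x$: $e^x-1\ge x$, and the Taylor remainder estimate $|e^x-1-x|\le\tfrac12 x^2 e^{|x|}$. I will write $Y:=-\int_0^t V(X_s)\,ds$, so that \eqref{trace} reads $Tr(e^{-tH}-e^{-tH_0})=p_t^{(\alpha)}(0)\int_{\bR^d}E_{x,x}^t(e^{Y}-1)\,dx$. The first thing to record is that the Fubini and Chapman--Kolmogorov computation leading to \eqref{oftenused}, now applied to $V$ rather than $|V|$ --- which is legitimate precisely because the $|V|$-version in \eqref{oftenused} is finite --- gives $\int_{\bR^d}E_{x,x}^t(Y)\,dx=-t\int_{\bR^d}V(x)\,dx$, while \eqref{oftenused} itself bounds $\int_{\bR^d}E_{x,x}^t(|Y|)\,dx$ by $t\|V\|_1$, with equality when $V\le0$.

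For part (i): the lower bound will come from integrating the pointwise inequality $e^{Y}-1\ge Y$ against $E_{x,x}^t$ and over $\bR^d$, which yields $Tr(e^{-tH}-e^{-tH_0})\ge -p_t^{(\alpha)}(0)\,t\int_{\bR^d}V(x)\,dx=p_t^{(\alpha)}(0)\,t\|V\|_1$ since $V\le0$. For the upper bound, I use that $V\le0$ forces $0\le Y\le t\|V\|_\infty$, hence $Y^2\le t\|V\|_\infty\,Y$ and $e^{Y}\le e^{t\|V\|_\infty}$, so $e^{Y}-1\le Y+\tfrac12 Y^2 e^{Y}\le Y\bigl(1+\tfrac12 t\|V\|_\infty e^{t\|V\|_\infty}\bigr)$; integrating and applying \eqref{oftenused} gives exactly the stated bound. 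The asymptotic statements then follow at once from $p_t^{(\alpha)}(0)=t^{-d/\alpha}p_1^{(\alpha)}(0)$ and the fact that $\|V\|_1,\|V\|_\infty$ are fixed, so the quadratic correction is $p_t^{(\alpha)}(0)\,\mathcal O(t^2)$ as $t\downarrow0$.

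For part (ii): I will isolate the $k=1$ term of \eqref{traceassum}, which by the identity above equals $-p_t^{(\alpha)}(0)\,t\int_{\bR^d}V(x)\,dx$, so that $Tr(e^{-tH}-e^{-tH_0})+p_t^{(\alpha)}(0)\,t\int_{\bR^d}V(x)\,dx=p_t^{(\alpha)}(0)\int_{\bR^d}E_{x,x}^t(e^{Y}-1-Y)\,dx$. Then I apply $|e^{Y}-1-Y|\le\tfrac12 Y^2 e^{|Y|}$ with $|Y|\le\int_0^t|V(X_s)|\,ds\le t\|V\|_\infty$, followed by $Y^2\le\bigl(\int_0^t|V(X_s)|\,ds\bigr)^2\le t\|V\|_\infty\int_0^t|V(X_s)|\,ds$, and integrate using \eqref{oftenused}; this bounds the right-hand side in absolute value by $\tfrac12 p_t^{(\alpha)}(0)\,t^2\|V\|_1\|V\|_\infty e^{t\|V\|_\infty}$, so $C=\tfrac12$ suffices and \eqref{firstterm} follows as before. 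All of this is routine; the only point that needs attention is the interchange of the bridge expectation with the spatial integration --- and, in part (ii), the identification of the $k=1$ term for a signed potential --- but this is handled exactly as in the derivation of \eqref{oftenused} and \eqref{traceassum}, the required absolute integrability being supplied by \eqref{oftenused} applied to $|V|$. So I do not anticipate a genuine obstacle; the only care needed is bookkeeping the elementary inequalities so the constants emerge as stated, in particular the sharp $\tfrac12$ in the upper bound of \eqref{thm1}.
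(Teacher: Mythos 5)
Your proposal is correct and follows essentially the same route as the paper: the same elementary pointwise inequalities for $e^{Y}-1$ (you derive van den Berg's inequality \eqref{ineqab1} from the Taylor remainder rather than citing it), combined with \eqref{oftenused} for the integration step, and for part (ii) your bound $|e^{Y}-1-Y|\le\tfrac12 Y^2e^{|Y|}$ is just the summed form of the tail estimate the paper performs termwise on the series \eqref{traceassum}. No gaps.
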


\begin{theorem}\label{theorem2} Suppose $V\in L^{\infty}(\bR^d)\cap L^{1}(\bR^d)$ and that it is also uniformly H{\"o}lder continuous of order  $\gamma$ (there exists a constant $M\in(0,\infty)$ such that $|V(x)-V(y)|\leq M|x-y|^{\gamma}$, for all $x, y\in \bR^d$) with $0<\gamma< \alpha\wedge1$, whenever $0<\alpha\leq 1$, and with $0<\gamma\leq 1$, whenever $1<\alpha<2$.  
Then for all $t>0$,
\begin{eqnarray}
&&\left|\left(Tr(e^{-tH}-e^{-tH_0})\right)+p_t^{(\alpha)}(0)t\int_{\bR^d}V(x)dx
-p_t^{(\alpha)}(0)\frac{1}{2}t^2\int_{\bR^d}|V(x)|^2dx\right|\nonumber\\
&&\leq C_{\alpha,\gamma, d}\|V\|_1p_t^{(\alpha)}(0)\left(|V\|_{\infty}^2e^{t\|V\|_{\infty}} t^3+t^{\gamma/\alpha+2}\right),  
\end{eqnarray}
where the constant $C_{\alpha,\gamma, d}$ depends only on $\alpha$, $\gamma$ and $d$.  In particular, 
\begin{equation}\label{thm4}
Tr(e^{-tH}-e^{-tH_0})=p_t^{(\alpha)}(0)\left(-t\int_{\bR^d} V(x)dx+\frac{1}{2}t^2\int_{\bR^d}|V(x)|^2dx+{\mathcal O}(t^{\gamma/\alpha+2})\right), 
\end{equation}
as $t\downarrow 0$.  
\end{theorem}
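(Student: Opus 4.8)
The plan is to work directly with the absolutely convergent expansion \eqref{traceassum}. Write
\[
Tr\left(e^{-tH}-e^{-tH_0}\right)=p_t^{(\alpha)}(0)\left(A_1(t)+A_2(t)+R(t)\right),
\]
where $A_k(t)=\frac{(-1)^k}{k!}\int_{\bR^d}E_{x,x}^t\big(\int_0^t V(X_s)\,ds\big)^k dx$ for $k=1,2$ and $R(t)=\sum_{k\ge 3}\frac{(-1)^k}{k!}\int_{\bR^d}E_{x,x}^t\big(\int_0^t V(X_s)\,ds\big)^k dx$. The terms $A_1$ and $R$ are dealt with first. Repeating the computation in \eqref{L1norm}--\eqref{oftenused} with $V$ in place of $|V|$ (the Chapman--Kolmogorov identity together with $p_t^{(\alpha)}(x,x)=p_t^{(\alpha)}(0)$) gives $\int_{\bR^d}E_{x,x}^t\big(\int_0^t V(X_s)\,ds\big)dx=t\int_{\bR^d}V$, so $A_1(t)=-t\int_{\bR^d}V(x)\,dx$, which is the first term of \eqref{thm4}. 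For the tail, using the pointwise bound $\int_0^t|V(X_s)|\,ds\le t\|V\|_\infty$ to factor out $(t\|V\|_\infty)^{k-1}$ and then \eqref{oftenused} yields $\big|\int_{\bR^d}E_{x,x}^t\big(\int_0^t V(X_s)\,ds\big)^k dx\big|\le (t\|V\|_\infty)^{k-1}\,t\|V\|_1$, hence $|R(t)|\le t\|V\|_1\sum_{k\ge3}\frac{(t\|V\|_\infty)^{k-1}}{k!}\le\frac16\,t^3\|V\|_1\|V\|_\infty^2 e^{t\|V\|_\infty}$, which (after multiplying by $p_t^{(\alpha)}(0)$) is the first error term in the Theorem.

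The substantive term is $A_2$. Time-ordering gives $\big(\int_0^t V(X_s)\,ds\big)^2=2\int_0^t\!\int_0^u V(X_s)V(X_u)\,ds\,du$. For $0<s<u<t$ the Markov property of the bridge expresses $E_{x,x}^t(V(X_s)V(X_u))$ as an integral against $p_s^{(\alpha)}(x,y)\,p_{u-s}^{(\alpha)}(y,z)\,p_{t-u}^{(\alpha)}(z,x)/p_t^{(\alpha)}(0)$; integrating over $x$ and applying Chapman--Kolmogorov (the $x$-integral of $p_s^{(\alpha)}(x-y)\,p_{t-u}^{(\alpha)}(x-z)$ is $p_{s+t-u}^{(\alpha)}(y-z)$, so that, setting $r:=u-s$, the whole integrand depends on $(s,u)$ only through $r$) leads, after the change of variables $r=u-s$, to
\[
p_t^{(\alpha)}(0)\,A_2(t)=\int_0^t(t-r)\left(\int_{\bR^d}\!\int_{\bR^d} p_r^{(\alpha)}(y-z)\,p_{t-r}^{(\alpha)}(y-z)\,V(y)V(z)\,dy\,dz\right)dr.
\]
Now split $V(y)V(z)=V(y)^2+V(y)(V(z)-V(y))$. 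For the first piece, the identity $\int_{\bR^d}p_r^{(\alpha)}(w)p_{t-r}^{(\alpha)}(w)\,dw=p_t^{(\alpha)}(0)$ (Chapman--Kolmogorov once more, using the symmetry of $p^{(\alpha)}$) together with $\int_0^t(t-r)\,dr=t^2/2$ shows that it contributes exactly $\tfrac12 t^2\,p_t^{(\alpha)}(0)\int_{\bR^d}|V(x)|^2 dx$, the second term of \eqref{thm4}. For the second piece, uniform Hölder continuity of $V$ gives $|V(y)(V(z)-V(y))|\le M|V(y)|\,|y-z|^\gamma$, so its total contribution is at most $M\|V\|_1\int_0^t(t-r)\,I(r,t)\,dr$, where $I(r,t):=\int_{\bR^d}|w|^\gamma\, p_r^{(\alpha)}(w)\,p_{t-r}^{(\alpha)}(w)\,dw$.

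It remains to prove $\int_0^t(t-r)\,I(r,t)\,dr\le C_{\alpha,\gamma,d}\,p_t^{(\alpha)}(0)\,t^{\gamma/\alpha+2}$. Since $I(r,t)=I(t-r,t)$, the substitution $r\mapsto t-r$ gives $\int_0^t(t-r)I(r,t)\,dr=\tfrac t2\int_0^t I(r,t)\,dr$, and the scaling \eqref{scaling} (substitute $w=t^{1/\alpha}v$ and $r=t\rho$) turns $\int_0^t I(r,t)\,dr$ into $t^{(\gamma-d)/\alpha+1}\int_0^1\!\int_{\bR^d}|v|^\gamma p_\rho^{(\alpha)}(v)\,p_{1-\rho}^{(\alpha)}(v)\,dv\,d\rho$. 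The last double integral is finite: splitting at $\rho=\tfrac12$, on $\rho\in(0,\tfrac12]$ one bounds the regular factor $p_{1-\rho}^{(\alpha)}(v)\le p_{1-\rho}^{(\alpha)}(0)\le 2^{d/\alpha}p_1^{(\alpha)}(0)$ and uses $\int_{\bR^d}|v|^\gamma p_\rho^{(\alpha)}(v)\,dv=\rho^{\gamma/\alpha}\,E^0|X_1|^\gamma$, which is integrable near $\rho=0$; the interval $[\tfrac12,1)$ is treated symmetrically. Here $E^0|X_1|^\gamma=\int_{\bR^d}|v|^\gamma p_1^{(\alpha)}(v)\,dv<\infty$ precisely because $\gamma<\alpha$ --- which is exactly what the hypotheses on $\gamma$ guarantee ($\gamma<\alpha\wedge1$ when $0<\alpha\le1$, and $\gamma\le1<\alpha$ when $1<\alpha<2$) --- and this finiteness is read off directly from \eqref{basicestiamte}. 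Recalling $p_t^{(\alpha)}(0)=t^{-d/\alpha}p_1^{(\alpha)}(0)$ one obtains the claimed bound; combining it with the estimate for $|R(t)|$ gives the displayed inequality of the Theorem, and \eqref{thm4} follows by letting $t\downarrow0$.

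The only non-routine point, and the main obstacle, is this last moment estimate. A crude bound such as $|V(z)-V(y)|\le2\|V\|_\infty$ would only reproduce a term of the same order $t^2$ as the main term, so Hölder regularity of $V$ is genuinely required to produce the extra factor $t^{\gamma/\alpha}$. One then has to extract the correct power of $t$ by scaling and control the (integrable but singular) behaviour of $p_\rho^{(\alpha)}(v)\,p_{1-\rho}^{(\alpha)}(v)$ as $\rho\to0$ and $\rho\to1$, balancing the singular heat factor against the $|v|^\gamma$ weight --- and this is precisely where the two-sided heat-kernel estimate \eqref{basicestiamte} and the restriction $\gamma<\alpha$ are forced.
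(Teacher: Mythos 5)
Your proof is correct. Structurally it is closest to the paper's ``alternative proof'' in \S4: the same Duhamel-type expansion \eqref{traceassum} handles the first-order term and the $O(t^3)$ tail, and the same splitting $V(y)V(z)=V(y)^2+V(y)(V(z)-V(y))$ of the time-ordered second-order integral isolates $\tfrac12 t^2\int|V|^2$ and reduces everything to the weighted integral $\int_0^t(t-r)\int|w|^\gamma p_r^{(\alpha)}(w)p_{t-r}^{(\alpha)}(w)\,dw\,dr$. Where you genuinely depart from the paper is in how that integral is estimated. Both of the paper's proofs invoke the Bogdan--Jakubowski ``5P-inequality'' \eqref{5P}, i.e.\ $p_r^{(\alpha)}(w)p_{t-r}^{(\alpha)}(w)\le C\,p_t^{(\alpha)}(0)\bigl(p_r^{(\alpha)}(w)+p_{t-r}^{(\alpha)}(w)\bigr)$, which in turn rests on the two-sided kernel bounds \eqref{basicestiamte} via the 3P-inequality; the paper's closing remark in \S3 even emphasizes that this is the step forcing them away from the elementary Brownian-bridge argument of van den Berg. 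You bypass 5P entirely: the symmetry $I(r,t)=I(t-r,t)$, the self-similarity \eqref{scaling}, and a split at $\rho=1/2$ reduce the matter to $p_{1-\rho}^{(\alpha)}(v)\le p_{1-\rho}^{(\alpha)}(0)\le 2^{d/\alpha}p_1^{(\alpha)}(0)$ together with $E^0|X_\rho|^\gamma=\rho^{\gamma/\alpha}E^0|X_1|^\gamma<\infty$ for $\gamma<\alpha$. This is a real simplification: it needs only the elementary on-diagonal bound \eqref{trdenest} (a consequence of subordination) plus the tail estimate guaranteeing finite $\gamma$-moments, rather than the comparability \eqref{basicestiamte} in full; in effect your midpoint split proves the only instance of 5P actually used. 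The trade-off is that your argument exploits exact self-similarity, whereas the 5P route transfers more readily to the non-scaling processes of \S5 (the mixed-stable and relativistic cases), where the paper reuses the same scheme; for those extensions one would have to redo your splitting with the appropriate near-diagonal bounds in place of exact scaling.
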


Theorems \ref{theorem1} and \ref{theorem2} are proved in \S2.  In \S3, we provide extensions to other non-local operators. 
\section{Proof of Theorems  \ref{theorem1} and \ref{theorem2}}

\begin{proof}[Proof of Theorem \ref{theorem1}.]  Setting 
$$
a=\int_0^t V(X_s)ds,\quad \text{and}\quad 
b=t\|V\|_{\infty},
$$
we observe that $-b\leq a\leq 0$.  We then use the following elementary inequality from  \cite{vanden1}
\begin{equation}\label{ineqab1}
-a\leq e^{-a}-1\leq -a\left(1+\frac{1}{2}be^b\right)
\end{equation}
to conclude that 
\begin{equation}
-\int_0^t V(X_s)ds \leq  \left(e^{-\int_0^tV(X_s)ds}-1\right)\nonumber
\leq \left[{-\int_0^t V(X_s)ds}\right]\left(1+\frac{1}{2}t\|V\|_{\infty}e^{t\|V\|_{\infty}}\right).
\end{equation}
Taking expectations of both sides of this inequality with respect to $E_{x, x}^t$ and then integrating on $\bR^d$ with respect to $x$, it follows exactly as in the derivation of the general bound \eqref{generalbound}, that \eqref{thm1} holds.  This concludes the proof of (i) in Theorem \ref{theorem1}.

For (ii), we observe that by \eqref{traceassum} we have (again using \eqref{oftenused}) 
\begin{eqnarray}\label{part(ii)}
&&\Big|Tr(e^{-tH}-e^{-tH_0})+p_t^{(\alpha)}(0)t\int_{\bR^d}V(x)dx\Big|\\
&&\leq p_t^{(\alpha)}(0)\sum_{k=2}^{\infty} \frac{1}{k!}\int_{\bR^d} E_{x,x}^t\Big|\int_0^t V(X_s)ds\Big|^k dx\nonumber\\
&&\leq p_t^{(\alpha)}(0) \sum_{k=2}^{\infty} \frac{t^{k-1}\|V\|_{\infty}^{k-1}}{k!}\int_{\bR^d} E_{x,x}\left(\int_0^t |V(X_s)|ds\right) dx\nonumber\\
&&=p_t^{(\alpha)}(0)t\|V\|_1\sum_{k=2}^{\infty} \frac{t^{k-1}\|V\|_{\infty}^{k-1}}{k!}\leq Cp_t^{(\alpha)}(0)t^2\|V\|_1\|V\|_{\infty}e^{t\|V\|_{\infty}}\nonumber,
\end{eqnarray}
for some absolute constant $C$.  This proves (ii) and concludes the proof of the theorem. 
\end{proof}

\begin{remark}  We observe that the above holds for any operator which arises from subordination of a Brownian motion.  For such operators the transition probabilities $p_t$ are symmetric radial and decreasing, hence $p_t(x, x)=p_t(0)$.  In section \ref{othernonlocal}  below we give examples of other nonlocal operators for which Theorem \ref{theorem2} also holds. 
\end{remark}

\begin{proof}[Proof of Theorem \ref{theorem2}]  We begin by observing that (exactly as in the proof of inequality \eqref{part(ii)}) we have 
\begin{eqnarray}
&&\left|e^{-\int_0^t V(X_s)ds}-1 +\int_0^t V(X_s)ds-\frac{1}{2}\left[\int_0^t V(X_s)ds\right]^2\right|\nonumber\\
&\leq& C(t\|V\|_{\infty})^2e^{t\|V\|_{\infty}}\int_0^t|V(X_s)|ds,\label{ineqexpV2}
\end{eqnarray}
for some constant $C$. By taking expectation of both sides of \eqref{ineqexpV2} with respect to $E_{x, x}^t$ and then integrating with respect to $x$, we obtain  

\begin{eqnarray*}
&&\int_{\bR^d} E_{x,x}^t\left(\left|e^{-\int_0^t V(X_s)ds}-1+\int_0^t V(X_s)ds-\frac{1}{2}\left[\int_0^t V(X_s)ds\right]^2\right|\right)dx\nonumber\\
&\leq& C(t\|V\|_{\infty})^2e^{t\|V\|_{\infty}}\int_{\bR^d}E_{x,x}^t \left(\int_0^t|V(X_s)|ds\right)  dx\\
&=&C(t\|V\|_{\infty})^2e^{t\|V\|_{\infty}} t\|V\|_1,
\end{eqnarray*}
where we use again \eqref{oftenused}.  

Returning to the definition of the trace differences in \eqref{trace} we see that this leads to 
\begin{eqnarray}
&&\left|\frac{1}{p_t^{(\alpha)}(0)}(Tr(e^{-tH}-e^{-tH_0}))+t\int_{\bR^d}V(x)dx
-\frac{1}{2}\int_{{\mathbb R}^d}E_{x,x}^t\left(\left[\int_0^tV(X_s)ds\right]^2\right)dx\right|\nonumber\\
&& \;\;\;\;\;\; \leq C(t\|V\|_{\infty})^2e^{t\|V\|_{\infty}}t\|V\|_1.\label{tracedifference}
\end{eqnarray}
It remains to estimate the term $E_{x,x}^t([\cdot]^2)$. 
Since $V$ is uniformly H\"{o}lder with exponent $\gamma$ and constant $M$, we have
\begin{equation}\label{lipconst}
|V(X_s+x)-V(x)|\leq M|X_s|^{\gamma}.
\end{equation}
Hence, 
\begin{eqnarray}
\left|E_{x,x}^t\left[\int_0^tV(X_s)ds\right]^2-t^2V^2(x)\right|&=&\left|E_{x,x}^t\left[\int_0^tV(X_s)ds\right]^2-
\left[\int_0^tV(x)ds\right]^2\right|\nonumber\\
&=& \left|E_{x,x}^t\left(\left[\int_0^tV(X_s)ds\right]^2
-\left[\int_0^tV(x)ds\right]^2\right)\right|\nonumber\\
&=& E_{0,0}^t(\left[\int_0^t(V(X_s+x)-V(x))ds\right]\cdot\\
&&\left[\int_0^tV(X_s+x)+V(x))ds\right]).\nonumber
\end{eqnarray}
By employing \eqref{lipconst}, we obtain
$$\left|E_{x,x}^t\left(\left[\int_0^tV(X_s)ds\right]^2\right)-t^2V^2(x)\right|\leq  ME_{0,0}^t\left(\left[\int_0^t|X_s|^{\gamma}ds\right]\left[\int_0^t\left(|V(X_s+x)|+|V(x)|\right)ds\right]\right).$$
Integrating both sides of this inequality with respect to $x$ and using Fubini's theorem, we  have that the value of the second integral becomes $2t\|V\|_1$.  Thus we arrive at 
\begin{eqnarray}\label{expect^2}
&&\left|\int_{{\mathbb R}^d}E_{x,x}^t\left(\left[\int_0^tV(X_s)ds\right]^2\right)dx-t^2\int_{\bR^d}|V(x)|^2dx\right|\\
&\leq&2tM\|V\|_1E_{0,0}^t\left(\int_0^t|X_s|^{\gamma}ds\right)\nonumber.
\end{eqnarray}
Now, it remains to estimate the expectation on the right side of \eqref{expect^2}.  As in \eqref{L1norm} we have 
\begin{eqnarray}\label{conditional} 
E_{0,0}^t\left( \int_0^t |X_s|^{\gamma}ds\right)&=& \int_0^tE_{0,0}^t (|X_s|^{\gamma})ds\\
&=& \int_0^t\int_{{\mathbb R}^d}\frac{p_s^{(\alpha)}(0,y)p_{t-s}^{(\alpha)}(y,0)}{p_t^{(\alpha)}(0,0)}|y|^{\gamma}dyds\nonumber.
\end{eqnarray}

To estimate the right hand side we recall the following ``5P-inequality" 
\begin{equation}\label{5P}
\frac{p_s^{(\alpha)}(x,z)p_t^{(\alpha)}(z,y)}{p_{s+t}^{(\alpha)}(x,y)}\leq C_{\alpha, d}\left(p_s^{(\alpha)}(x,z)+p_t^{(\alpha)}(z,y)\right),
\end{equation}
valid for all $x, y, z\in \bR^d$, $s, t>0$ and $0<\alpha<2$,  where $C_{\alpha, d}$ is a constant depending only on $\alpha$ and $d$.  This inequality is proved by Bogdan and Jakubowski in \cite[p.182]{BogJak}. It is derived form the the ``3P-inequality" 
\begin{equation}\label{3P}
p_s^{(\alpha)}(x,z)\wedge p_t^{(\alpha)}(z,y)\leq C_{\alpha, d}\,p_{s+t}^{(\alpha)}(x,y).
\end{equation}

Inequality \eqref{3P} itself is proved by a simple computation replacing $p_s^{(\alpha)}$ with the quantity from \eqref{basicestiamte}.  From \eqref{3P} and the fact that for  $a, b\geq 0$ we have $ab = (a\wedge b)(a\vee b)$ and $(a\vee b)\leq (a + b)$), \eqref{5P} follows immediately.

Returning to \eqref{conditional}, we have, 
\begin{eqnarray*}
E_{0,0}^t\left( \int_0^t |X_s|^{\gamma}ds\right)
&\leq& C_{\alpha, d}\int_0^t\int_{{\mathbb R}^d}(p_s^{(\alpha)}(0,y)+p_{t-s}^{(\alpha)}(y,0)) |y|^{\gamma}dyds\\
&=& 2C_{\alpha, d}\int_0^t\int_{{\mathbb R}^d} p_s^{(\alpha)}(0,y)|y|^{\gamma}dyds\\
&=& 2C_{\alpha, d}\int_0^t E^0(|X_s|^{\gamma})ds\\
&=& 2C_{\alpha, d}\int_0^t s^{\gamma/\alpha}E^0(|X_1|^{\gamma})ds\\
&=& 2C_{\alpha, d}E^0(|X_1|^{\gamma})\frac{t^{\gamma/\alpha+1}}{\gamma/\alpha+1}, 
\end{eqnarray*}
where we used the scaling property of the stable process coming from \eqref{scaling}.  We now recall that $E^0(|X_1|^{\gamma})$  is finite under our assumption that $\gamma<\alpha$. (This fact follows trivially from \eqref{basicestiamte}.) Thus we see that 
\begin{equation}\label{bound}
E_{0,0}^t\left( \int_0^t |X_s|^{\gamma}ds\right)\leq  C_{\alpha,\gamma, d}\,t^{\gamma/\alpha+1},
\end{equation}
where the constant $C_{\alpha,\gamma, d}$ depends only on $\alpha$, $\gamma$ and $d$.   We conclude that 
\begin{eqnarray}\label{expection^2}
\left|\int_{{\mathbb R}^d}E_{x,x}^t\left(\left[\int_0^tV(X_s)ds\right]^2\right)dx-t^2\int_{\bR^d}|V(x)|^2dx\right|
&\leq&2tM\|V\|_1E_{0,0}^t\left(\int_0^t|X_s|^{\gamma}ds\right)\nonumber\\
&\leq & M\|V\|_1  C_{\alpha,\gamma, d}\,t^{\gamma/\alpha+2}.
\end{eqnarray}

Returning to \eqref{tracedifference} we find that 
\begin{eqnarray}\label{secondtracedifference}
&&\left|\frac{1}{p_t^{\alpha}(0)}\left(Tr(e^{-tH}-e^{-tH_0})\right)+t\int_{\bR^d}V(x)dx
-\frac{1}{2}t^2\int_{\bR^d}|V(x)|^2dx\right|\nonumber\\
&&\leq Ct^3\|V\|_{\infty}^2e^{t\|V\|_{\infty}}\|V\|_1
+M\|V\|_1  C_{\alpha,\gamma, d}\,t^{\gamma/\alpha+2}\\
&&\leq C_{\alpha,\gamma, d}\|V\|_1\left(|V\|_{\infty}^2e^{t\|V\|_{\infty}} t^3+t^{\gamma/\alpha+2}\right)\nonumber.
\end{eqnarray}
Rewriting this in the form stated in Theorem \ref{theorem2} we arrive at the announced bound 

\begin{eqnarray}
&&\left|\left(Tr(e^{-tH}-e^{-tH_0})\right)+p_t^{\alpha}(0)t\int_{\bR^d}V(x)dx
-p_t^{(\alpha)}(0)t^2\frac{1}{2}\int_{\bR^d}|V(x)^2dx\right|\nonumber\\
&&\leq C_{\alpha,\gamma, d}\|V\|_1p_t^{(\alpha)}(0)\left(|V\|_{\infty}^2e^{t\|V\|_{\infty}} t^3+t^{\gamma/\alpha+2}\right), 
\end{eqnarray}
valid  for all $t>0$. 
\end{proof}

\begin{remark}
We remark that in the case of Brownian motion, $\alpha=2$, one may use the fact (as done in \cite{vanden1}) that 
\begin{equation}\label{conditongaussian}
\frac{p_s^{(2)}(0,y)p_{t-s}^{(2)}(y,0)}{p_t^{(2)}(0,0)}=p_{\frac{s}{t}(t-s)}^{(2)}(0,y)
\end{equation}
to explicitly compute the right hand side of \eqref{conditional}.  While the inequality \eqref{5P} does not hold for $\alpha=2$, we may  estimate the right hand side of \eqref{conditional} in the case of Brownian motion without the explicit estimate that comes from \eqref{conditongaussian}. Indeed,  by observing that in law the Brownian bridge started at $0$ and conditioned to return to $0$ in time $t$ is the same as $B_s-\frac{s}{t}B_{t} $, where $B_s$ is standard Brownian motion,  a simple computation leads to the same conclusion as \eqref{bound} for $\alpha=2$.  Indeed,
\begin{eqnarray}\label{Brownian}
E_{0,0}^t\left( \int_0^t |B_s|^{\gamma}ds\right) &=&\int_0^t E^0|B_s-\frac{s}{t}B_{t}|^{\gamma} ds\nonumber\\
&\leq& C_{\gamma}\int_0^t E^0\left(|B_s|^{\gamma}+\left(\frac{s}{t}\right)^{\gamma}|B_t|^{\gamma}\right) ds\\
&=&C_{\gamma} E^0|B_1|^{\gamma} \int_0^{t}\left(s^{\gamma/2}+\left(\frac{s}{t}\right)^{\gamma}t^{\gamma/2}\right) ds\nonumber\\
&=& C_{\gamma, d}\, t^{\gamma/2+1},\nonumber 
\end{eqnarray}
where $C_{\gamma, d}$ depends only on $\gamma$ and $d$. 
  
Unfortunately, the simple path construction $X_s-\frac{s}{t}X_{t} $ only leads to the stable bridge when $\alpha=2$ and we are not able to repeat \eqref{Brownian} in the case $0<\alpha<2$, hence our use of \eqref{5P}.   However, it may be that the construction by Chaumont in  \cite{Cha} (or perhaps some of the estimates of Fitzsimmons and Getoor in \cite{FitGet}) can be used to bypass the estimate \eqref{5P}. 
 \end{remark}
 
 \section{Alternative  proofs of Theorems \ref{theorem1} and  \ref{theorem2}} 
 Recalling  formula  \eqref{traceassum} we have 
\begin{equation}\label{traceassum1}
 Tr\left(e^{-tH}-e^{-tH_0}\right)=p_t^{(\alpha)}(0)\sum_{k=1}^{\infty} \frac{(-1)^k}{k!}\int_{\bR^d} E_{x,x}^t\left(\int_0^t V(X_s)ds\right)^k dx,
\end{equation}
where as mentioned there the sum is absolutely convergent for all potentials $V\in L^{\infty}(\bR^d)\cap L^{1}(\bR^d)$ and all $t>0$.  We note also that in fact, for every $N\geq 1$, 

\begin{equation}\label{traceassum2} 
 Tr\left(e^{-tH}-e^{-tH_0}\right)=p_t^{(\alpha)}(0)\left(\sum_{k=1}^{N} \frac{(-1)^k}{k!}\int_{\bR^d} E_{x,x}^t\left(\int_0^t V(X_s)ds\right)^k dx + 0(t^{N+1})\right), 
\end{equation}
as $t\downarrow 0$. This follows from the fact that 
\begin{eqnarray}
\Big|\sum_{k=N+1}^{\infty} \frac{(-1)^{k}}{k!}\int_{\bR^d} E_{x,x}^t\left(\int_0^t V(X_s)ds\right)^k dx\Big|&\leq&
 \sum_{k=N+1}^{\infty} \frac{1}{k!}t^{k}\|V\|_{\infty}^{k-1}\|V\|_{1}\\
 &\leq& \frac{t^{N+1}\|V\|_{\infty}^N\|V\|_1}{(N+1)!} e^{t\|V\|_{\infty}}\nonumber
\end{eqnarray}

 Next observe that 
$$
\left(\int_0^t V(X_s)ds\right)^k=k! J_k
$$
where 
$$
J_k=\int_{0\leq s_1\leq s_2\leq \cdots \leq s_k\leq t} V(X_{s_1})ds_1 V(X_{s_2}) ds_2\cdots V(X_{s_k}) ds_k.  
$$
Recalling that the finite dimensional  distributions of the stable bridge are given by 
$$
P_{x,x}^t\{X_{s_1}\in dy_1, X_{s_2}\in dy_2, \dots , X_{s_k}\in dy_k\}=\frac{1}{p_t^{(\alpha)}(x,x)}{\prod_{j=0}^k p_{s_{j+1}-s_j}^{(\alpha)}(y_{j+1}-y_j)},
$$
where $y_0=y_{k+1}=x$, $s_0=0, s_{k+1}=t$, we can write 

\begin{equation}
 Tr\left(e^{-tH}-e^{-tH_0}\right)=\sum_{k=1}^{\infty}(-1)^k\int_{0\leq s_1\leq s_2\leq \cdots \leq s_k\leq t}\tilde{I}_k(s_1, s_2,
  \dots, s_k) ds_1ds_2\cdots ds_k,  
 \end{equation}
 where 
\begin{eqnarray*}
\tilde{I}_k(s_1, s_2,
  \dots, s_k) =\int_{(R^d)^{k+1}}\prod_{j=0}^k p_{s_{j+1}-s_j}^{(\alpha)}(y_{j+1}-y_j)\prod_{j=1}^kV(y_j)d{y_1}d{y_2}. 
\cdots d{y_k}dx. 
\end{eqnarray*}
 Here (as several times before) we used the fact that  ${p_t^{(\alpha)}(x,x)}={p_t^{(\alpha)}(0)}.$
 By Fubini's theorem we may integrate with respect to $x$ first and use the fact that  
 $$
 \int_{\bR^d} p_{s_1}^{(\alpha)}(y_1-x)p_{t-s_k}^{(\alpha)}(y_k-x)dx=p_{t-(s_k-s_1)}^{(\alpha)}(y_k-y_1)
 $$
 to arrive at 
\begin{equation}\label{multipleinttrace}
 Tr\left(e^{-tH}-e^{-tH_0}\right)=\sum_{k=1}^{\infty}(-1)^k\int_{0\leq s_1\leq s_2\leq \cdots \leq s_k\leq t}{I}_k(s_1, s_2,
  \dots, s_k) ds_1ds_2\cdots ds_k,  
 \end{equation}
 where 
\begin{eqnarray*}
{I}_k(s_1, s_2,
\dots, s_k) &=&\int_{(R^d)^{k}}\left(p_{t-(s_k-s_1)}^{(\alpha)}(y_k-y_1)V(y_1)\right)\cdot\\
&&\left(\prod_{j=1}^{k-1} p_{s_{j+1}-s_j}^{(\alpha)}(y_{j+1}-y_j)V(y_{j+1})\right)d{y_1}d{y_2}
\cdots d{y_k}.
\end{eqnarray*}
We can then also re-write the right hand side of \eqref{traceassum2} as 
\begin{equation}\label{traceassum3}
p_t^{(\alpha)}(0)\left(\sum_{k=1}^{N}\frac{(-1)^k}{p_t^{(\alpha)}(0)}\int_{0\leq s_1\leq s_2\leq \cdots \leq s_k\leq t}{I}_k(s_1, s_2,
  \dots, s_k) ds_1ds_2\cdots ds_k+ 0(t^{N+1})\right). 
\end{equation}

From  these formulas we can also compute coefficients in the asymptotic expansion of the trace as $t\downarrow0$ fairly directly and in particular give alternative proofs of Theorems \ref{theorem1} and \ref{theorem2}.   

If  $k=1$, we see that  
$$p_{t-(s_k-s_1)}^{(\alpha)}(y_k-y_1)=p_{t}^{(\alpha)}(0)$$ and 
$$
I_1(s_1)=p_{t}^{(\alpha)}(0)\int_{\bR^d} V(y)dy,  
$$
which immediately leads to \eqref{firstterm} in Theorem \ref{theorem1}.  

If  $k=2$, we write 
\begin{eqnarray}\label{secondterm1}
&&I_2(s_1, s_2) =\int_{\bR^d}\int_{\bR^d} p_{t-(s_2-s_1)}^{(\alpha)}(y_2-y_1)p_{(s_2-s_1)}^{(\alpha)}(y_2-y_1)V(y_1)V(y_2) dy_1dy_2\\
&=&\int_{\bR^d}\int_{\bR^d} p_{t-(s_2-s_1)}^{(\alpha)}(y_2-y_1)p_{(s_2-s_1)}^{(\alpha)}(y_2-y_1)\left(V(y_1)-V(y_2)+V(y_2)\right)V(y_2) dy_1dy_2\nonumber\\
&=& \int_{\bR^d}\int_{\bR^d} p_{t-(s_2-s_1)}^{(\alpha)}(y_2-y_1)p_{(s_2-s_1)}^{(\alpha)}(y_2-y_1)V^2(y_2) dy_1dy_2\nonumber\\
&+&\int_{\bR^d}\int_{\bR^d} p_{t-(s_2-s_1)}^{(\alpha)}(y_2-y_1)p_{(s_2-s_1)}^{(\alpha)}(y_2-y_1)\left(V(y_1)-V(y_2)\right)V(y_2)dy_1dy_2.\nonumber
\end{eqnarray}

Integrating first with respect to $y_1$  we conclude that 
$$
\int_{\bR^d}\int_{\bR^d} p_{t-(s_2-s_1)}^{(\alpha)}(y_2-y_1)p_{(s_2-s_1)}^{(\alpha)}(y_2-y_1)V^2(y_2) dy_1dy_2=p_{t}^{(\alpha)}(0)\int_{\bR^d}|V(y)|^2dy.
$$
On the other hand, $|V(y_1)-V(y_2)|\leq M |y_1-y_2|^{\gamma}$ an hence 
\begin{eqnarray}\label{secondtterm}
&&\left|\int_{\bR^d}\int_{\bR^d} p_{t-(s_2-s_1)}^{(\alpha)}(y_2-y_1)p_{(s_2-s_1)}^{(\alpha)}(y_2-y_1)\left(V(y_1)-V(y_2)\right)V(y_2)dy_1dy_2\right|\\
&\leq& M \int_{\bR^d}\int_{\bR^d} p_{t-(s_2-s_1)}^{(\alpha)}(y_2-y_1)p_{(s_2-s_1)}^{(\alpha)}(y_2-y_1)|y_1-y_2|^{\gamma}|V(y_2)|dy_1dy_2\nonumber\\
&=&M \int_{\bR^d}\left(\int_{\bR^d} p_{t-(s_2-s_1)}^{(\alpha)}(y_2-y_1)p_{(s_2-s_1)}^{(\alpha)}(y_2-y_1)|y_2-y_1|^{\gamma} dy_1\right) |V(y_2)|dy_2\nonumber\\
&=&M \left(\int_{\bR^d} p_{t-(s_2-s_1)}^{(\alpha)}(y)p_{(s_2-s_1)}^{(\alpha)}(y)|y|^{\gamma} dy \right) \left(\int_{\bR^d}|V(y)|dy\right).\nonumber
\end{eqnarray}

From inequality \eqref{5P} we again obtain 
$$
\int_{\bR^d} p_{t-(s_2-s_1)}^{(\alpha)}(y)p_{(s_2-s_1)}^{(\alpha)}(y)|y|^{\gamma} dy\leq 
C_{\alpha, d}\,p_t^{(\alpha)}(0)\int_{\bR^d}\left( p_{t-(s_2-s_1)}^{(\alpha)}(y)+p_{(s_2-s_1)}^{(\alpha)}(y)\right) |y|^{\gamma} dy.
$$
Thus 
$$
\int_{0\leq s_1\leq s_2\leq t}{I}_2(s_1, s_2) ds_1ds_2=p_{t}^{(\alpha)}(0)t^2\int_{\bR^d}|V(y)|^2dy+R_t
$$
where 
\begin{eqnarray}
|R_t| &\leq& C_{\alpha, d} M \|V\|_1p_{t}^{(\alpha)}(0)\int_{0\leq s_1\leq s_2\leq t}\left( E^{0}|X_{t-(s_2-s_1)}|^{\gamma}+E^{0}|X_{(s_2-s_1)}|^{\gamma}\right)ds_1ds_2\nonumber\\ 
&\leq & C_{\alpha, \gamma, d} M \|V\|_1p_{t}^{(\alpha)}(0) t^{\gamma/\alpha+2}.
\end{eqnarray}
As in \eqref{secondtracedifference}, this gives another proof of \eqref{thm4} in Theorem \ref{theorem2}. 
 
 \section{Extension to other non-local operators}\label{othernonlocal} 


Taking $0<\beta<\alpha<2$ and $a\geq 0$, we consider the process $Z_t^a=X_t+aY_t$, where $X_t$ and $Y_t$ are independent $\alpha$-stable and $\beta$-stable processes, respectively. This process is called the independent sum of the symmetric $\alpha$-stable process $X$ and the symmetric $\beta$-stable process $Y$ with weight $a$. The infinitesimal generator of $Z^a$ is $\Delta^{\alpha/2}+a^{\beta} \Delta^{\beta/2}$.   This again is a non-local operator.   Acting on functions $f\in C_0^{\infty}(\bR^d)$ we have 

\begin{eqnarray*}
\left(\Delta^{\alpha/2}+a^{\beta}\Delta^{\beta/2}\right)f(x)=
{\mathcal A}_{d,\alpha}\lim_{\epsilon\to0^+}\int_{\{|y|>\epsilon\}}\left(\frac{1}{|y|^{d+\alpha}}+\frac{a^{\beta}}{|y|^{d+\beta}}\right)[f(x+y)-f(x)]dy,
\end{eqnarray*}
where ${\mathcal A}_{d,\alpha}$ is as in \eqref{infgen}.

 Properties of the heat kernel (transition probabilities) for this operator have been studied by several people in recent years.  See for example Chen and Kumagai \cite{CheKum}, Chen, Kim and Song \cite{CheKimSon} or Jakubowski and Szczypkowski \cite{JakSzc}, and references given there. 
%
If we denote the heat kernel of this operator by $p_t^{a}(x)$, we have that 
 \begin{eqnarray}
 p_t^{a}(x)&=&\frac{1}{(2\pi)^d}\int_{{\mathbb R}^d}e^{-ix\cdot\xi}e^{-t(|\xi|^{\alpha}+a^{\beta}|\xi|^{\beta})}d\xi\\
 &=& \int_0^{\infty}\frac{1}{(4\pi s)^{d/2}}e^{\frac{-|x|^2}{4s}} \eta^{a}_t(s)\,ds,
 \end{eqnarray}
 where $\eta^{a}_t(s)$ is be the density function
of the sum of the $\alpha/2$-stable subordinator and $a^2$-times the $\beta/2$-stable subordinator. 
Again, this density is  radial, symmetric, and decreasing in $|x|$. It is proved in \cite{CheKum} and \cite{CheKimSon} that there is a constant $C_{\alpha, \beta, d}$ depending only on the parameters indicated such that for all $x\in \bR^d$ and $t>0$, 
\begin{equation}\label{chenkummixed} 
C_{\alpha, \beta, d}^{-1}f^a_t(x)\leq p^a_t(x)\leq C_{\alpha, \beta, d}f^a_t(x)
\end{equation}
where 
$$
f^a_t(x)=\left((a^\beta t)^{-d/\beta}\wedge t^{-d/\alpha}\right)\wedge\left(\frac{t}{|x|^{d+\alpha}}+\frac{a^{\beta}t}{|x|^{d+\beta}}\right).
$$
We note that for $a=0$ this is just the estimate in \eqref{basicestiamte}.  For any $\gamma>0$ with $0<\gamma<\beta<\alpha$ we have for any $t>0$, 
\begin{eqnarray}\label{mixedmoments}
\int_0^t E^0(|Z_s^a|^{\gamma})ds &\leq& C_{\gamma} \left(\int_0^t E^0(|X_s|^{\gamma})ds+ a^{\gamma}\int_0^t E^0(|Y_s|^{\gamma})ds\right)\nonumber\\
&=& C_{\gamma} \left(E^0(|X_1|^{\gamma})\frac{t^{\gamma/\alpha+1}}{\gamma/\alpha+1}+a^{\gamma}E^0(|Y_1|^{\gamma})\frac{t^{\gamma/\beta+1}}{\gamma/\beta+1}\right)\\
&=& C_{a, \alpha, \beta, d} \left(\frac{t^{\gamma/\alpha+1}}{\gamma/\alpha+1}+\frac{t^{\gamma/\beta+1}}{\gamma/\beta+1}\right)\nonumber.  
\end{eqnarray}
It also follows from the ``3P-inequality"  (24) in   \cite{JakSzc} (using again the fact that for any $a, b\geq 0$ we have $ab = (a\wedge b)(a\vee b)$ and $(a\vee b)\leq (a + b)$) that we have the following  ``5P-inequality" for this process: 

\begin{equation}\label{5P-a}
\frac{p_s^{a}(x,z)p_t^{a}(z,y)}{p_{s+t}^{a}(x,y)}\leq C_{a, \alpha, \beta, d}\left( p_s^{a}(x,z)+p_t^{a}(z,y) \right),
\end{equation}
valid for all $x,y, z\in \bR^d$ and all $s, t>0$.

With \eqref{mixedmoments} and \eqref{5P-a}, both proofs above for Theorem \ref{theorem2} can be repeated to obtain the same result for the operator $\Delta^{\alpha/2}+a^{\beta}\Delta^{\beta/2}$, replacing $\Delta^{\alpha/2}$.   More precisely we have 
   
 \begin{theorem}\label{theorem3}
Let $a\geq 0$, $0<\beta<\alpha <2$ and let $H_0^a=\Delta^{\alpha/2}+a^{\beta}\Delta^{\beta/2}$. Suppose $V\in L^{\infty}(\bR^d)\cap L^{1}(\bR^d)$ and that it is also uniformly H{\"o}lder continuous of order  $\gamma$,  with $0<\gamma< \beta\wedge1$, whenever $0<\beta \leq 1$, and with $0<\gamma\leq 1$, whenever $1<\beta<2$.
Let $H^{a}=\Delta^{\alpha/2}+a^{\beta}\Delta^{\beta/2}+V$. 
Then for all $t>0$,
\begin{eqnarray}
&&\left|\left(Tr(e^{-tH^a}-e^{-tH_0^a})\right)+p_t^{a}(0)t\int_{\bR^d}V(x)dx
-p_t^{a}(0)\frac{1}{2}t^2\int_{\bR^d}|V(x)|^2dx\right|\nonumber\\
&&\leq C_{a, \alpha, \beta, \gamma, d}\|V\|_1p_t^{a}(0)\left(|V\|_{\infty}^2e^{t\|V\|_{\infty}} t^3+t^{\gamma/\alpha+2}+t^{\gamma/\beta+2}\right),  
\end{eqnarray}
where the constant $C_{a, \alpha,\beta, \gamma, d}$ depends only on $a$, $\alpha$, $\beta$, $\gamma$ and $d$.  In particular, 
\begin{equation}\label{thm2}
Tr(e^{-tH^a}-e^{-tH_0^a})=p_t^{a}(0)\left(-t\int_{\bR^d} V(x)dx+\frac{1}{2}t^2\int_{\bR^d}|V(x)|^2dx+{\mathcal O}(t^{\gamma/\alpha+2})\right), 
\end{equation}
as $t\downarrow 0$.  
\end{theorem}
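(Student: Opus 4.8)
The plan is to follow either of the two proofs of Theorem \ref{theorem2} essentially line by line, substituting the process $Z^a$ for the $\alpha$-stable process $X$, and invoking the two ingredients that have already been recorded above: the moment bound \eqref{mixedmoments} and the ``5P-inequality'' \eqref{5P-a}. The only structural facts about $X$ used in \S3 are that its transition density is radial, symmetric and decreasing (so $p_t(x,x)=p_t(0)$), the normalization identity \eqref{oftenused}, the 5P-inequality, and scaling to evaluate $\int_0^t E^0(|X_s|^\gamma)\,ds$. For $Z^a$, the first holds because $p_t^a$ is a subordinated Gaussian; the second holds because its proof uses only Chapman--Kolmogorov together with $p_t^a(x,x)=p_t^a(0)$; the third is exactly \eqref{5P-a}; and the replacement for the last is \eqref{mixedmoments}, giving $\int_0^t E^0(|Z_s^a|^\gamma)\,ds\le C_{a,\alpha,\beta,d}\big(t^{\gamma/\alpha+1}+t^{\gamma/\beta+1}\big)$ whenever $\gamma<\beta<\alpha$. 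As noted in the Remark following the proof of Theorem \ref{theorem1}, the expansion \eqref{traceassum} is valid for any operator arising from subordination of Brownian motion, hence in particular for $H^a=\Delta^{\alpha/2}+a^\beta\Delta^{\beta/2}+V$ and $H_0^a$.

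Concretely, I would first observe that the expansion \eqref{traceassum} and the tail estimate producing \eqref{part(ii)} hold verbatim for $H^a, H_0^a$, since those arguments use only $\|V\|_\infty$, $\|V\|_1$ and \eqref{oftenused}; this isolates the $k=1$ term $-p_t^a(0)\,t\int_{\bR^d}V$ and bounds the sum of the $k\ge 2$ terms beyond the quadratic one by $C\,p_t^a(0)\,t^3\|V\|_1\|V\|_\infty^2 e^{t\|V\|_\infty}$, exactly as in \eqref{tracedifference}. It then remains to treat the $k=2$ term $\int_{\bR^d}E_{x,x}^t\big(\big[\int_0^t V(Z_s^a)\,ds\big]^2\big)\,dx$. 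Splitting $V(Z_s^a+x)=(V(Z_s^a+x)-V(x))+V(x)$ and using the H\"older hypothesis in the form $|V(Z_s^a+x)-V(x)|\le M|Z_s^a|^\gamma$, then integrating in $x$ and using \eqref{oftenused}, one gets
\begin{equation*}
\Big|\int_{\bR^d}E_{x,x}^t\Big(\Big[\int_0^t V(Z_s^a)\,ds\Big]^2\Big)\,dx-t^2\int_{\bR^d}|V(x)|^2\,dx\Big|\le 2tM\|V\|_1\,E_{0,0}^t\Big(\int_0^t|Z_s^a|^\gamma\,ds\Big).
\end{equation*}
Applying \eqref{5P-a} to the bridge density $p_s^a(0,y)p_{t-s}^a(y,0)/p_t^a(0,0)$ reduces $E_{0,0}^t(\int_0^t|Z_s^a|^\gamma\,ds)$ to $2C_{a,\alpha,\beta,d}\int_0^t E^0(|Z_s^a|^\gamma)\,ds$, and \eqref{mixedmoments} bounds this by $C_{a,\alpha,\beta,\gamma,d}\big(t^{\gamma/\alpha+1}+t^{\gamma/\beta+1}\big)$. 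Combining the three contributions exactly as in \eqref{secondtracedifference} yields the stated inequality with remainder $p_t^a(0)\|V\|_1 C_{a,\alpha,\beta,\gamma,d}\big(\|V\|_\infty^2 e^{t\|V\|_\infty}t^3+t^{\gamma/\alpha+2}+t^{\gamma/\beta+2}\big)$; since $\beta<\alpha$ gives $t^{\gamma/\beta+2}=o(t^{\gamma/\alpha+2})$ as $t\downarrow0$, this collapses to \eqref{thm2}. The alternative argument via the multiple-integral formula \eqref{multipleinttrace} runs identically, the $k=2$ integrand being handled by the same splitting $V(y_1)=(V(y_1)-V(y_2))+V(y_2)$ and the same two inputs.

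I do not anticipate a genuine obstacle: the whole point is that the two analytic inputs survive the passage from a single stable process to the independent sum of two, and both have already been supplied (the moment bound from scaling each summand, the 5P-inequality from \cite{JakSzc}). The one point deserving care is the admissible range of $\gamma$: finiteness of $E^0(|Z_1^a|^\gamma)$ forces $\gamma<\beta$ — the competing requirement $\gamma<\alpha$ being automatic from $\beta<\alpha$ — which is exactly why the hypotheses of Theorem \ref{theorem3} are phrased in terms of $\beta$ rather than $\alpha$; one should also confirm that the constant in \eqref{mixedmoments} depends only on $a,\alpha,\beta,d$ and is harmless as $t\downarrow0$, which holds since each term there is an explicit power of $t$ with finite coefficient.
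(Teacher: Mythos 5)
Your proposal is correct and is essentially the paper's own argument: the paper proves Theorem \ref{theorem3} by exactly this route, namely by observing that the proof of Theorem \ref{theorem2} goes through verbatim once the stable-process inputs are replaced by the moment bound \eqref{mixedmoments} and the ``5P-inequality'' \eqref{5P-a} for $Z^a$, with the extra $t^{\gamma/\beta+2}$ term absorbed into ${\mathcal O}(t^{\gamma/\alpha+2})$ since $\beta<\alpha$. Your identification of $\gamma<\beta$ as the binding moment condition also matches the paper's hypotheses.
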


We next consider another stochastic process associated with a non-local operator that has been of considerable interest to many people in recent years, the so called relativistic Brownian motion and its more general versions, the $\alpha$-stable relativistic process.  This is again a L\'evy process denote by $X^m_t$ with characteristic function 

\begin{equation}\label{relativistic}
e^{-t\left(\left(|\xi|^{2}+m^{2/\alpha}\right)^{\alpha/2}-m\right)}=E(e^{i\xi\cdot X_t^m})=\int_{{\mathbb R}^d}e^{i\xi\cdot y}p_t^{(m, \alpha)}(y)dy, 
\end{equation}
for any $m\geq 0$ and $0<\alpha<2$.  The case $\alpha=1$ is the relativistic Brownian motion and $m=0$ yields back the stable processes of order $\alpha$. Henceforth we assume that $m>0$.  As in the case of stable processes, $X^m_t$ is a subordination of Brownian motion and in fact 

\begin{eqnarray}
p_t^{(m, \alpha)}(x)=&=&\frac{1}{(2\pi)^d}\int_{{\mathbb R}^d}e^{-ix\cdot\xi}e^{-t\left(\left(|\xi|^{2}+m^{2/\alpha}\right)^{\alpha/2}-m\right)}d\xi\\
 &=& \int_0^{\infty}\frac{1}{(4\pi s)^{d/2}}e^{\frac{-|x|^2}{4s}} \eta^{m,\alpha}_t(s)\,ds,\nonumber
 \end{eqnarray}
 where $\eta^{m,\alpha}_t(s)$ is the density of the subordinator with Bernstein  function $\Phi(\lambda)=(\lambda+m^{2/\alpha})^{\alpha/2}-m$.  As before we see that  $p_t^{(m, \alpha)}(x)$ is a radial, symmetric, and decreasing in $|x|$.  By a simple change of variables it is easy to see that the transition densities have the scaling property
 \begin{equation}\label{m-scaling}
 p_t^{(m, \alpha)}(x)=m^{d/\alpha}p_{mt}^{(1, \alpha)}(m^{1/\alpha}x).  
 \end{equation}
These properties can all be found in Ryznar \cite{Ryz2} where it is also proved that 
\begin{equation}\label{relintemrsofstable}
p_t^{(m, \alpha)}(x)=e^{mt}\int_0^{\infty}\frac{1}{(4\pi s)^{d/2}}e^{\frac{-|x|^2}{4s}} e^{-m^{2/\alpha}s}\eta^{\alpha/2}_t(s)\,ds,
\end{equation}
where $\eta^{\alpha/2}_t(s)$ is the density for the $\alpha/2$-stable subordinator. By scaling 
$$\eta^{\alpha/2}_t(s)=  t^{-2/\alpha}\eta^{\alpha/2}_1(st^{-2/\alpha}).$$ 
Hence changing variables in \eqref{relintemrsofstable} leads to  
\begin{equation}
\lim_{t\downarrow 0}e^{-mt}t^{d/\alpha}p_t^{(m, \alpha)}(0)=\int_0^{\infty}\frac{1}{(4\pi s)^{d/2}}\eta^{\alpha/2}_1(s)\,ds=p_1^{\alpha}(0)=\frac{\omega_d \Gamma(d/\alpha)}{(2\pi)^d\alpha},
\end{equation}
where the last equality follows from \eqref{p(0)value}. 

The infinitesimal generator of $X_t^m$ is given by $m-\left(m^{2/\alpha}-\Delta\right)^{\alpha/2}$. The case $\alpha=1$ gives the generator $m-\sqrt{-\Delta+m^2}$ which is the 
free relativistic Hamiltonian. For more on these  notions we refer the reader to Carmona, Masters and  Simon \cite{CarMasSim}.

Estimates for the global  transition probabilities $p_{t}^{(m, \alpha)}(x)$ and their Dirichlet counterparts for various domains have been studied by many authors.  We refer the reader to \cite{Che1},  \cite{CheKimKum},  \cite{CheKimSon1}, \cite{CheKimSon2}, \cite{CheSon}, \cite{CheKum}, \cite{Ryz1}, \cite{Ryz2}. Here we recall the following small time estimate  from Chen, Kim and Song \cite[Theorem 2.1]{CheKimSon1}. There exists a constant $C_{\alpha, m, d}$ depending on $\alpha$, $m$ and $d$ such that for all $x\in \bR^d$ and all $t\in (0, 1]$,

\begin{equation}
C_{\alpha, m, d}^{-1}\, t^{-d/\alpha}\wedge \frac{t\,\Psi(m^{\frac{1}{\alpha}}|x|)}{|x|^{d+\alpha}}\leq p_t^{(m, \alpha)}(x)\leq C_{\alpha, m, d} \,t^{-d/\alpha}\wedge \frac{t\,\Psi(m^{\frac{1}{\alpha}}|x|)}{|x|^{d+\alpha}},
\end{equation}
where 
$$
\Psi(r)=2^{-(d+\alpha)}\Gamma\left(\frac{d+\alpha}{2}\right)^{-1}\int_0^{\infty} s^{\frac{d+\alpha}{2}-1} e^{-s/4} e^{-r^2/s} ds
$$
which is a decreasing function of $r^2$ with $\Psi(0)=1$, $\Psi(r)\leq 1$ and with 
$$
c_1^{-1}e^{-r} r^{(d+\alpha-1)/2}\leq \Psi(r)\leq c_1e^{-r} r^{(d+\alpha-1)/2},
$$
for all $r\geq 1$.  (See also  Chen and Song,\cite[p. 277]{CheSon}.)  Setting 
$$
{\tilde p}_t(x)=t^{-d/\alpha}\wedge \frac{t\,\Psi(m^{\frac{1}{\alpha}}|x|)}{|x|^{d+\alpha}}
$$
it follows trivially (exactly as in Bogdan and Jakubowski  \cite[p.182]{BogJak}) that 
\begin{equation}
{\tilde p}_s(x)\wedge {\tilde p}_{t-s}(x)\leq C{\tilde p}_{t}(0),
\end{equation}
for all $x\in \bR^d$ and $0<s<t\leq 1$.  Therefore,       
\begin{equation}\label{3P-inequality}
p_s^{(m,\alpha)}(0,x)\wedge p_{t-s}^{(m,\alpha)}(x,0)\leq C_{\alpha, m, d}\,p_{t}^{(m,\alpha)}(0,0)
\end{equation}
for all $x\in \bR^d$ and $0<s<t\leq 1$. This as before leads to the ``5P-inequality"

 \begin{equation}\label{5P-relativistic}
\frac{p_s^{(m,\alpha)}(x, 0)p_{t-s}^{(m,\alpha)}(0,x)}{p_{t}^{(m, \alpha)}(0,0)}\leq C_{\alpha, m, d}\left( p_s^{(m,\alpha)}(x,0)+p_{t-s}^{(m,\alpha)}(0,x) \right),
\end{equation}
valid for all $x\in \bR^d$, $0<s<t\leq 1$.  (Recall that $p_s^{(m,\alpha)}(x,y)=p_s^{(m,\alpha)}(x-y)$.) Even though this estimate is not as general as the previous ``5P-inequalities" above, it suffices for our needs. 

Next, we need to compute bounds for the $\gamma$ moments of  $X_s^m$. 
More precisely we want to understand the behavior of 
$\int_0^{t}E^0|X_r^m|^{\gamma} dr$, as  $t\downarrow 0$. Let us denote  the standard Brownian motion in $\bR^d$ by $B_s$ and recall that $E^0|B_{2s}|^{\gamma}=2^{\gamma/2}s^{\gamma/2}E^0|B_1|^{\gamma}$ and $2^{\gamma/2}E^0|B_1|^{\gamma}=C_{\gamma, d}$,  where the latter is an explicit and computable constant whose exact value is not relevant for our calculations here. Using this, \eqref{m-scaling} and \eqref{relintemrsofstable} we have 
\begin{eqnarray}\label{rel-moments}
\int_0^{t}E^0|X_r^m|^{\gamma}ds&=&m^{\frac{d\gamma}{\alpha}}\int_0^{t}E^0|X_{mr}^1|^{\gamma}dr
=m^{({\frac{d\gamma}{\alpha}}-1)}\int_0^{mt}E^0|X_r^1|^{\gamma}dr\nonumber\\
&=&m^{({\frac{d\gamma}{\alpha}}-1)}\int_0^{mt}\int_{\bR^d}|x|^{\gamma}p_r^{(1,\alpha)}(x)dx dr\\
&=& m^{({\frac{d\gamma}{\alpha}}-1)}\int_0^{mt}e^r\left(\int_0^{\infty} 
E^{0}|B_{2s}|^{\gamma} e^{-s} \eta^{\alpha/2}_r(s)ds\right)dr\nonumber\\
&=&C_{\gamma, d}\, m^{({\frac{d\gamma}{\alpha}}-1)}\int_0^{mt}e^r\left(\int_0^{\infty} 
s^{\gamma/2} e^{-s} \eta^{\alpha/2}_r(s)ds\right)dr,\nonumber\\
&=& C_{\gamma, d}\, m^{({\frac{d\gamma}{\alpha}}-1)}\int_0^{mt}e^r\left(\int_0^{\infty} 
s^{\gamma/2} e^{-s} r^{-2/\alpha}\eta^{\alpha/2}_1(r^{-2/\alpha}s)ds\right)dr\nonumber\\
&=&C_{\gamma, d}\, m^{({\frac{d\gamma}{\alpha}}-1)}\int_0^{mt}r^{\gamma/\alpha}e^r\left(\int_0^{\infty} 
s^{\gamma/2} e^{-sr^{2/\alpha}} \eta^{\alpha/2}_1(s)ds\right)dr,\nonumber
\end{eqnarray}
where we used the scaling property of the stable subordinator for the second to the last line and a change variables for the last line.  But

$$
\int_0^{\infty} 
s^{\gamma/2} e^{-sr^{2/\alpha}} \eta^{\alpha/2}_1(s)ds\leq \int_0^{\infty} 
s^{\gamma/2}  \eta^{\alpha/2}_1(s)ds =C_{\gamma}E^0|X_1|^{\gamma}<\infty,
$$
whenever $\gamma<\alpha$.  (Here, $X_1$ is the stable process of order $\alpha$ at time 1.)  Thus, 
$$
\int_0^{t}E^0|X_r^m|^{\gamma}dr\leq C_{\alpha, \gamma, m, d}\, t^{\gamma/\alpha+1}e^{mt}  
$$
and this is valid for all $t>0$.  From this and the inequality \eqref{5P-relativistic}, we obtain 

\begin{eqnarray}\label{m-cond-bound}
E_{0,0}^t\left( \int_0^t |X^m_s|^{\gamma}ds\right)
&\leq& C_{\alpha,m, d}\int_0^t\int_{{\mathbb R}^d}(p_s^{(m,\alpha)}(y,0)+p_{t-s}^{(m,\alpha)}(0,y)) |y|^{\gamma}dyds\nonumber\\
&=& 2C_{\alpha, m, d}\int_0^t\int_{{\mathbb R}^d} p_s^{(m,\alpha)}(0,y)|y|^{\gamma}dyds\\
&=& 2C_{\alpha, m, d}\int_0^t E^0(|X^m_s|^{\gamma})ds\nonumber\\
&\leq& C_{\alpha,\gamma, m, d}\,t^{\gamma/\alpha+1},\nonumber
\end{eqnarray}
for all $t\in (0, 1]$. 

With \eqref{m-cond-bound}, the proof of Theorem \ref{theorem2} can be repeated to obtain  a similar result for the operator associated with the relativistic stable processes.  More precisely we have 
  
 \begin{theorem}\label{theorem4}
Let $H_0^m=m-\left(m^{2/\alpha}-\Delta\right)^{\alpha/2}$. Suppose $V\in L^{\infty}(\bR^d)\cap L^{1}(\bR^d)$ and that it is also uniformly H{\"o}lder continuous of order  $\gamma$,  with $0<\gamma< \alpha\wedge1$, whenever $0<\alpha\leq 1$, and with $0<\gamma\leq 1$, whenever $1<\alpha<2$.
Let $H^{m}=m-\left(m^{2/\alpha}-\Delta\right)^{\alpha/2}+V$. 
Then, for all $t\in (0, 1]$,
\begin{eqnarray}
&&\left|\left(Tr(e^{-tH^m}-e^{-tH_0^m})\right)+p_t^{\alpha}(0)t\int_{\bR^d}V(x)dx
-p_t^{\alpha}(0)\frac{1}{2}t^2\int_{\bR^d}|V(x)|^2dx\right|\nonumber\\
&&\leq C_{\alpha,\gamma, m, d}\|V\|_1p_t^{(m, \alpha)}(0)\left(|V\|_{\infty}^2e^{t\|V\|_{\infty}} t^3+t^{\gamma/\alpha+2}\right),
\end{eqnarray}
where the constant $C_{\alpha,\gamma, m, d}$ depends only on $\alpha$, $\gamma$, $m$ and $d$.  

In particular, 
\begin{equation}\label{thm2}
Tr(e^{-tH^m}-e^{-tH_0^m})=p_t^{(m, \alpha)}(0)\left(-t\int_{\bR^d} V(x)dx+\frac{1}{2}t^2\int_{\bR^d}|V(x)|^2dx+{\mathcal O}(t^{\gamma/\alpha+2})\right), 
\end{equation}
as $t\downarrow 0$.  
\end{theorem}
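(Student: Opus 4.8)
The plan is to follow verbatim the structure of the proof of Theorem \ref{theorem2}, because the only two analytic inputs used there—namely a ``5P-inequality'' and a bound of the form $E_{0,0}^t(\int_0^t|X_s|^\gamma\,ds)\le C\,t^{\gamma/\alpha+1}$—are precisely the facts recorded in \eqref{5P-relativistic} and \eqref{m-cond-bound} for the relativistic process, valid on the range $t\in(0,1]$. First I would recall that $X_t^m$ is a subordinate Brownian motion, hence $p_t^{(m,\alpha)}(x)=p_t^{(m,\alpha)}(-x)$ is radial and decreasing in $|x|$, so that $p_t^{(m,\alpha)}(x,x)=p_t^{(m,\alpha)}(0)$ and the Feynman--Kac representation \eqref{feynmankac} together with the trace identity \eqref{trace} and the absolutely convergent expansion \eqref{traceassum} all carry over with $p_t^{(\alpha)}$ replaced by $p_t^{(m,\alpha)}$; note that \eqref{oftenused} only used Chapman--Kolmogorov and $p_t(x,x)=p_t(0)$, so it holds here too, giving the analogue of the a priori bound \eqref{generalbound}.

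Next I would reproduce the two elementary pointwise expansions: the third-order Taylor estimate \eqref{ineqexpV2}, which is purely algebraic and uses only $\|{-\int_0^tV(X_s)ds}\|\le t\|V\|_\infty$, and then—after taking $E_{x,x}^t$-expectations and integrating in $x$ using the analogue of \eqref{oftenused}—the bound \eqref{tracedifference} with $p_t^{(\alpha)}(0)$ replaced by $p_t^{(m,\alpha)}(0)$. The remaining task is to control $\int_{\bR^d}E_{x,x}^t([\int_0^tV(X_s)ds]^2)\,dx$. Here I would repeat the Hölder-continuity computation: write $V(X_s)=V(x)+(V(X_s+x)-V(x))$ shifted to the bridge started at $0$, use $|V(X_s+x)-V(x)|\le M|X_s|^\gamma$ as in \eqref{lipconst}, apply the factorization $[\int V]^2-[\int V(x)]^2=[\int(V(X_s+x)-V(x))][\int(V(X_s+x)+V(x))]$, integrate in $x$ so the second factor contributes $2t\|V\|_1$, and arrive at the analogue of \eqref{expect^2}. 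Then \eqref{m-cond-bound} gives $E_{0,0}^t(\int_0^t|X_s^m|^\gamma\,ds)\le C_{\alpha,\gamma,m,d}\,t^{\gamma/\alpha+1}$ for $t\in(0,1]$, yielding the analogue of \eqref{expection^2} and hence, substituting back into the relativistic version of \eqref{tracedifference} and multiplying through by $p_t^{(m,\alpha)}(0)$, exactly the asserted inequality; the restriction to $t\in(0,1]$ is forced by the same restriction in \eqref{5P-relativistic} and \eqref{m-cond-bound}. The displayed $\mathcal O(t^{\gamma/\alpha+2})$ asymptotic as $t\downarrow0$ is then immediate since $t^3=O(t^{\gamma/\alpha+2})$ for $\gamma<\alpha$.

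The main (and essentially only) obstacle is that, unlike in the pure stable case, the global two-sided bound involves the extra factor $\Psi(m^{1/\alpha}|x|)$ and is only asserted for $t\le1$; so I must be careful that every estimate is stated on $(0,1]$ and that the ``5P-inequality'' \eqref{5P-relativistic} is used only in the form already established via the $\tilde p_t$ majorant. A secondary point is that the finiteness of $E^0|X_1^m|^\gamma$ (equivalently the convergence of $\int_0^\infty s^{\gamma/2}\eta_1^{\alpha/2}(s)\,ds$) requires $\gamma<\alpha$, which is exactly the hypothesis of the theorem, so no new condition is needed. Everything else is a routine transcription of the two analytic lemmas into the argument of Theorem \ref{theorem2}, and the alternative ``multiple-integral'' proof of \S4 could equally be transcribed using \eqref{5P-relativistic} in place of \eqref{5P}.
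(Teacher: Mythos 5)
Your proposal is correct and matches the paper's own argument: the paper likewise establishes the relativistic ``5P-inequality'' \eqref{5P-relativistic} and the moment bound \eqref{m-cond-bound} for $t\in(0,1]$ and then simply states that the proof of Theorem \ref{theorem2} can be repeated verbatim. Your transcription of the intermediate steps (radial symmetry from subordination, the analogue of \eqref{oftenused}, the H\"older estimate, and the source of the $t\le 1$ restriction) is exactly what the paper intends.
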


\end{document}